\documentclass[12pt,reqno]{amsart}
\usepackage{graphicx}
\usepackage{epsfig}
\setlength{\textheight}{220mm} \setlength{\textwidth}{155mm}
\setlength{\oddsidemargin}{1.25mm}
\setlength{\evensidemargin}{1.25mm} \setlength{\topmargin}{0mm}

\renewcommand{\(}{\left\(}
\renewcommand{\)}{\right\)}
\renewcommand{\[}{\left\[}
\renewcommand{\]}{\right\]}

\numberwithin{equation}{section}
 \theoremstyle{plain}
\newtheorem{theorem}{Theorem}[section]
\newtheorem{lemma}[theorem]{Lemma}

\newtheorem{conjecture}[theorem]{Conjecture}

   \makeatletter
\def\proof{\@ifnextchar[{\@oproof}{\@nproof}}
\def\@oproof[#1][#2]{\trivlist\item[\hskip\labelsep\textit{#2 Proof of\
#1.}~]\ignorespaces}
\def\@nproof{\trivlist\item[\hskip\labelsep\textit{Proof.}~]\ignorespaces}

\makeatother

\begin{document}
\title[Convexity of quotients of theta functions]{Convexity of quotients of theta functions}
\author{Atul Dixit, Arindam Roy and Alexandru Zaharescu}\thanks{2010 \textit{Mathematics Subject Classification.} Primary 11F27, 33E05.\\
\textit{Keywords and phrases.} Jacobi theta function, Weierstrass elliptic function, Monotonicity, Heat equation.\\
The third author is supported by NSF grant number DMS - 0901621.}
\address{Department of Mathematics, University of Illinois, 1409 West Green
Street, Urbana, IL 61801, USA} \email{aadixit2@illinois.edu, roy22@illinois.edu, zaharesc@math.uiuc.edu}
\begin{abstract}
For fixed $u$ and $v$ such that $0\leq u<v<1/2$, the monotonicity of the quotients of Jacobi theta functions, namely, $\theta_{j}(u|i\pi t)/\theta_{j}(v|i\pi t)$, $j=1, 2, 3, 4$, on $0<t<\infty$ has been established in the previous works of A.Yu.~Solynin, K.~Schiefermayr, and Solynin and the first author. In the present paper, we show that the quotients $\theta_{2}(u|i\pi t)/\theta_{2}(v|i\pi t)$ and $\theta_{3}(u|i\pi t)/\theta_{3}(v|i\pi t)$ are convex on $0<t<\infty$.
\end{abstract}
\maketitle
\section{Introduction}
Let $q=e^{\pi i\tau}$ with Im $\tau>0$. The Jacobi theta functions are defined by \cite[p.~355, Section 13.19]{htf}
\begin{align*}\label{thetaf}  %
\theta_1(z|\tau) &= 2\sum_{n=0}^{\infty} (-1)^n
q^{(n+\frac{1}2)^2} \sin(2n+1)\pi z, \\  
\theta_2(z|\tau) &= 2\sum_{n=0}^{\infty} q^{(n+\frac{1}2)^2}
\cos(2n+1)\pi z,\\
\theta_3(z|\tau) &= 1+2\sum_{n=1}^{\infty} q^{n^2} \cos2n\pi z,\\
\theta_4(z|\tau) &= 1+2\sum_{n=1}^{\infty} (-1)^n q^{n^2}
\cos2n\pi z.
\end{align*}
We denote $\theta_{i}(z|\tau)$ by $\theta_{i}(z)$, $i=1, 2, 3$ and $4$, when the dependence on $z$ is to be emphasized and that on $\tau$ is to be suppressed. Moreover when $z=0$, we denote the above theta functions by $\theta_{i}$, i.e., $\theta_{i}:=\theta_{i}(0|\tau), i=1, 2, 3$ and $4$, where it is easy to see that $\theta_1=0$.

For $u,v\in \mathbb{C}$ and $\tau=i\pi t$ with Re $t>0$, define $S_j(u,v;t)$, $j=1, 2, 3$ and $4$, to be the following quotient of theta functions:
\begin{equation}\label{sdef}
S_j:=S_j(u,v;t):=\frac{\theta_j(u/2|i\pi t)}{\theta_j(v/2|i\pi t)}.
\end{equation}
Monotonicity of these quotients has attracted a lot of attention in recent years. Monotonicity of $S_{2}(u,v;t)$ on $0<t<\infty$ arose naturally in the work of A.Yu.~Solynin \cite{sol} where it is related to the steady-state distribution of heat. In particular, Solynin used it to prove a special case of a generalization of a conjecture due to A.A.~Gonchar \cite[Problem 7.45]{gp} posed by A.~Baernstein II \cite{alb}. (For complete history and progress on Gonchar's conjecture, the reader should consult \cite{bet,dixsoly}). However, the proof for $S_{2}(u,v;t)$ in \cite{sol} contained a small error. This was rectified by A.Yu.~Solynin and the first author in \cite{dixsoly}, where they also proved monotonicity of $S_1(u,v;t), S_{3}(u,v;t)$ and $S_{4}(u,v;t)$. However, it turns out that K.~Schiefermayr \cite[Theorem 1]{sch} obtained the same results as those in \cite{dixsoly} on monotonicity of $S_{3}(u,v;t)$ and $S_{4}(u,v;t)$ two years before the appearance of \cite{dixsoly}, though the proofs in \cite{dixsoly} and \cite{sch} use entirely different ideas. These results on monotonicity of $S_{j}(u,v;t), j=1, 2, 3, 4$, are stated in \cite{dixsoly} as follows.\\

\textit{For fixed $u$ and $v$ such that $0\le u<v<1$, the functions $S_1(u,v;t)$ and $S_4(u,v;t)$  are positive
and strictly increasing on $0<t<\infty$, while the functions
$S_2(u,v;t)$ and $S_3(u,v;t)$ are positive and strictly decreasing on $0<t<\infty$.}\\

At the end of the paper \cite{dixsoly}, based on numerical calculations, it was conjectured that $S_j(u,v;t)$, $j=1, 2, 3, 4$, are completely monotonic on $0<t<\infty$. A function $f$ is said to be completely monotonic on $[0,\infty)$ if $f\in C[0,\infty)$, $f\in C^{\infty}(0,\infty)$ and $(-1)^{k}f^{(k)}(t)\geq 0$ for any $k$ non-negative and $t>0$. Several functions related to gamma function, digamma function, polygamma function and modified Bessel function etc. have been shown to be completely monotonic. See \cite{chen, qgg, ism}. For a survey on properties of completely monotonic functions, see \cite{ms}. The above-mentioned conjecture can be precisely formulated (and corrected) as follows.
\begin{conjecture}\label{conj}
Let $S_j(u,v;t)$ be defined in \textup{(\ref{sdef})}. For fixed $u$ and $v$ such that $0\le u<v<1$, the functions $\frac{\partial}{\partial t}S_1(u,v;t), S_2(u,v;t), S_3(u,v;t)$ and $\frac{\partial}{\partial t}S_4(u,v;t)$ are completely monotonic on $0<t<\infty$.
\end{conjecture}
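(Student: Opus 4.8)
I will establish the convexity of $S_2$ and $S_3$, that is, the $k=2$ instance of Conjecture~\ref{conj} for these two indices, and I would organise the argument around the heat equation. Writing $q=e^{-\pi^2 t}$, each $\theta_j(z\,|\,i\pi t)$ solves $\partial_t\theta_j=\tfrac14\partial_z^2\theta_j$, and since $\theta_2,\theta_3>0$ for arguments in $[0,1/2)$ the quotients $S_2,S_3$ are positive. The starting point is the elementary identity
\begin{equation*}
\frac{\partial_t^2 S_j}{S_j}=\bigl(\partial_t\log S_j\bigr)^2+\partial_t^2\log S_j,
\end{equation*}
so that convexity is equivalent to $\bigl(\partial_t\log S_j\bigr)^2+\partial_t^2\log S_j\ge 0$. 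The entire problem is thereby transferred to the first two $t$-derivatives of $\log S_j$.

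Next I would insert the Jacobi product formulae. Setting $a=u/2$ and $b=v/2$ (so $0\le a<b<1/2$, whence $\cos 2\pi a>\cos 2\pi b$), the $z$-independent factors cancel in the quotient and leave
\begin{equation*}
\log S_3=\sum_{n\ge 1}\log\frac{1+2\cos(2\pi a)\,x_n+x_n^2}{1+2\cos(2\pi b)\,x_n+x_n^2},\qquad x_n=e^{-\lambda_n t},\ \ \lambda_n=\pi^2(2n-1),
\end{equation*}
with the analogous expression for $\log S_2$ (there $\lambda_n=2\pi^2 n$, together with a $t$-independent prefactor $\cos\pi a/\cos\pi b$ irrelevant to convexity). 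Differentiating term by term gives $\partial_t\log S_j=-\sum_n\alpha_n$ and $\partial_t^2\log S_j=\sum_n\beta_n$, where
\begin{equation*}
\alpha_n=\lambda_n\,\Psi(x_n),\quad \beta_n=\lambda_n^2\,x_n\,\Psi'(x_n),\quad \Psi(x)=\frac{\kappa\,x(1-x^2)}{\bigl(1+2\cos(2\pi a)x+x^2\bigr)\bigl(1+2\cos(2\pi b)x+x^2\bigr)},
\end{equation*}
and $\kappa=2(\cos2\pi a-\cos2\pi b)>0$. That every $\alpha_n>0$ for $0<x_n<1$ is precisely the known monotonicity of $S_2,S_3$ established in \cite{dixsoly,sch}, so it is available for free.

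The decisive third step exploits the common sign of the $\alpha_n$. Expanding the square,
\begin{equation*}
\frac{\partial_t^2 S_j}{S_j}=\Bigl(\sum_n\alpha_n\Bigr)^2+\sum_n\beta_n=\sum_n\bigl(\alpha_n^2+\beta_n\bigr)+2\sum_{m<n}\alpha_m\alpha_n,
\end{equation*}
and all the cross terms $\alpha_m\alpha_n$ are strictly positive. The main obstacle is that the diagonal contribution $\alpha_n^2+\beta_n=\lambda_n^2\bigl(\Psi(x_n)^2+x_n\Psi'(x_n)\bigr)$ is not sign-definite: a direct computation gives $\Psi(1)=0$ and $\Psi'(1)=-\kappa/\bigl(2(1+\cos2\pi a)(1+\cos2\pi b)\bigr)<0$, so $\alpha_n^2+\beta_n<0$ whenever $x_n$ is close to $1$, i.e. for small $t$ or for the lowest frequencies. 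In other words the individual factors of the product fail to be convex, and convexity of $S_j$ is a genuinely collective effect: the positive square must absorb the negative diagonal terms. This is where the real work lies.

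To complete the proof I would split the $t$-axis. For large $t$ all $x_n$ are small, $\Psi'(x_n)\approx\Psi'(0)=\kappa>0$, every $\beta_n>0$, and convexity (indeed log-convexity) is immediate. For $t\to0^+$ the product expansion is replaced by the modular representation $\theta_3(z\,|\,i\pi t)=(\pi t)^{-1/2}\sum_{m\in\mathbf{Z}}e^{-(z-m)^2/t}$ (and its analogue for $\theta_2$), from which $\log S_j\sim(b^2-a^2)/t$, hence $\partial_t^2\log S_j\sim 2(b^2-a^2)/t^3>0$ and $\bigl(\partial_t\log S_j\bigr)^2\sim(b^2-a^2)^2/t^4$; both are positive and the $t^{-4}$ term dominates, so convexity holds near the origin. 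The remaining compact $t$-range is the crux, requiring the explicit rational inequality $\bigl(\sum_n\alpha_n\bigr)^2\ge-\sum_n\beta_n$ with the positive cross terms and high-frequency diagonal terms made to dominate the finitely many negative low-frequency ones uniformly in $a,b$. A cleaner alternative, which would dispose of the obstacle entirely, is to use the heat equation to recast convexity as
\begin{equation*}
16\,\bigl(W(a)-W(b)\bigr)+\bigl(P(a)-P(b)\bigr)^2\ge 0,\qquad P=\frac{\theta_j''}{\theta_j}=4\,\partial_t\log\theta_j,\quad W=\partial_t^2\log\theta_j,
\end{equation*}
since the monotonicity results already give that $P$ is increasing in $z$; proving that $W$ is \emph{decreasing} in $z$ on $(0,1/2)$ — equivalently the log-convexity of $S_j$ — would make both terms nonnegative and finish the proof in one stroke. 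Deciding whether this stronger monotonicity actually holds is, I expect, the main open difficulty.
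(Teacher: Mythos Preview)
Your proposal correctly sets up the problem and arrives at the right reduction, but it stops short of a proof. Of your two routes, the first --- via the Jacobi product and a splitting of the $t$-axis --- is explicitly left incomplete on the compact middle range, where you yourself note that the diagonal terms $\alpha_n^2+\beta_n$ can be negative and positivity of the full sum must come from the cross terms. You offer no mechanism for this domination uniformly in $a,b$, so this route remains a genuine gap.

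Your second, ``cleaner'' route is exactly what the paper does, but you state it as a conjecture rather than proving it. Via the heat equation one has, up to normalization,
\[
W(x)=\partial_t^2\log\theta_j(x)=\frac{\theta_j^{(4)}(x)}{\theta_j(x)}-\Bigl(\frac{\theta_j''(x)}{\theta_j(x)}\Bigr)^2,
\]
and the paper's entire substance is the proof that this is strictly decreasing on $(0,1/2)$ for $j=2,3$, giving $\partial_t^2 L_j>0$ and hence $\partial_t^2 S_j>0$ directly. The argument is not short: one uses $(\theta_j'/\theta_j)'=-(\wp(x-\omega_j)-c_0)$ to rewrite $W$ and its $x$-derivative entirely in terms of $\wp$ at a shifted argument, divides by $\wp'$ to obtain an auxiliary $F_j$, and then shows $F_2(1/2)=0$, $F_2'>0$ (hence $F_2<0$) and $F_3(1/2)>0$, $F_3'<0$ (hence $F_3>0$). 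The sign of $F_j'$ in turn requires a further substitution $G_j=F_j'\,\wp'^2/(4A_2)$ and a case analysis according to the signs of two auxiliary polynomials $A_1,A_2$ in $\wp$, with the key algebraic inputs being Lemmas~\ref{ineq1} and~\ref{ineqq2}, which locate the roots of $A_2$ relative to $e_1,e_2$. In short, the ``main open difficulty'' you flag is the whole theorem; the paper resolves it through the Weierstrass $\wp$-function machinery rather than through any estimate on your $\alpha_n,\beta_n$.
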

If this conjecture is indeed true, by a theorem of S.N.~Bernstein and D.~Widder \cite[p.~95, Theorem 1]{wawi} there exist non-decreasing bounded functions $\gamma_{j}$ such that $S_{j}(u,v;t)=\int_{0}^{\infty}e^{-st}d\gamma_{j}(s)$ for $j=2, 3,$ and $\frac{\partial}{\partial t}S_{j}(u,v;t)=\int_{0}^{\infty}e^{-st}d\gamma_{j}(s)$ for $j=1,4$.

In the present paper, we study convexity of $S_{2}(u,v;t)$ and $S_{3}(u,v;t)$ as functions of $t$. Figures 1 and 2 seem to indicate that these quotients are convex on $0<t<\infty$, which is consistent with the above conjecture. 
\begin{figure}
\centering
\begin{tabular}{cc}
\epsfig{file=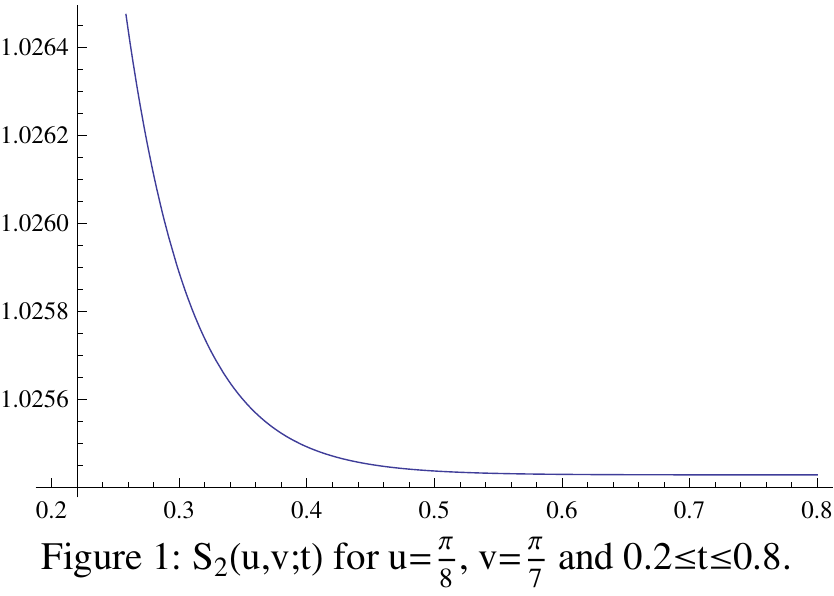, width=0.5\linewidth,clip=}&
\epsfig{file=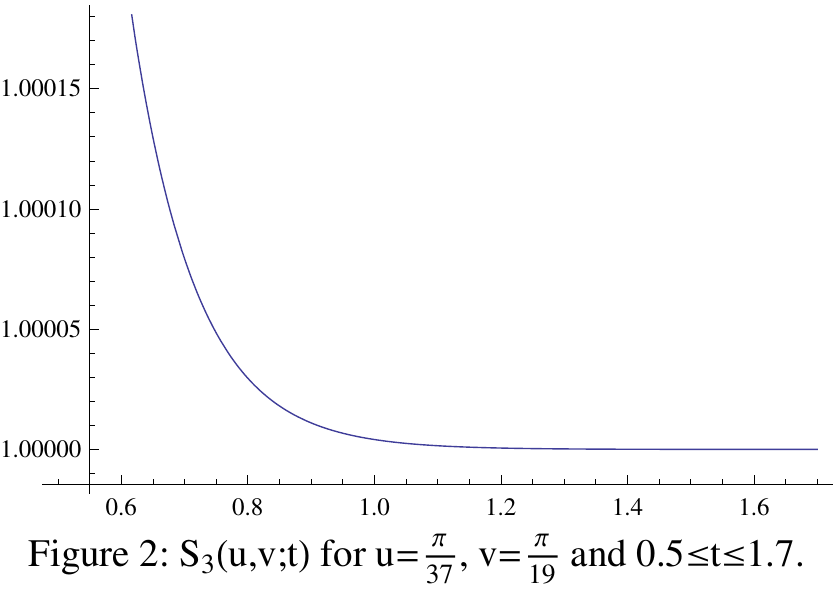, width=0.5\linewidth,clip=}
\end{tabular}
\end{figure}
Our main result given below shows that this is indeed true.  
\begin{theorem}\label{drz}
For fixed $u$ and $v$ such that $0\leq u<v<1$, the functions $S_{2}$ and $S_{3}$ are strictly convex on $0<t<\infty$.
In other words, $\frac{\partial S_{2}}{\partial t}$ and $\frac{\partial S_{3}}{\partial t}$
are negative and strictly increasing on $0<t<\infty$.
\end{theorem}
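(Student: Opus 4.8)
The plan is to reduce convexity of $S_j$ in $t$ to a statement about a single well-chosen auxiliary function, then exploit the heat equation satisfied by the theta functions. Recall that each $\theta_j(z|i\pi t)$ satisfies a heat-type equation: differentiating the defining series, one checks that $\frac{\partial}{\partial t}\theta_j = \frac{1}{4\pi}\frac{\partial^2}{\partial z^2}\theta_j$ (up to a normalization depending on the exact convention $q=e^{-\pi^2 t}$). Writing $\theta_j' $ and $\theta_j''$ for $z$-derivatives, this lets me convert every $t$-derivative of $\theta_j(u/2)$ and $\theta_j(v/2)$ into a $z$-derivative, which is crucial because the monotonicity theorem quoted in the excerpt is ultimately a statement about the $z$-variable behaviour of these functions. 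My first step is therefore to compute $\frac{\partial S_j}{\partial t}$ and $\frac{\partial^2 S_j}{\partial t^2}$ explicitly via the quotient rule and substitute the heat equation to express both purely in terms of $\theta_j(u/2),\theta_j(v/2)$ and their $z$-derivatives.

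The second step is to package the result of these computations. Since we already know from the quoted theorem that $S_2$ and $S_3$ are positive and strictly decreasing, i.e. $\frac{\partial S_j}{\partial t}<0$, the task ``$S_j$ is strictly convex'' is equivalent to ``$\frac{\partial S_j}{\partial t}$ is strictly increasing'', which in turn is $\frac{\partial^2 S_j}{\partial t^2}>0$. After the heat-equation substitution, $\frac{\partial^2 S_j}{\partial t^2}$ will be a rational expression in the $\theta_j$-values whose denominator is a positive power of $\theta_j(v/2|i\pi t)$; hence I only need the numerator to be positive. I expect the numerator to organize into a combination involving the Wronskian-type quantity $W:=\theta_j(u/2)\theta_j'(v/2)-\theta_j'(u/2)\theta_j(v/2)$ and its derivatives, because that Wronskian is precisely what controls the sign of $\frac{\partial S_j}{\partial t}$ in the monotonicity proof. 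The natural strategy is to re-use the monotonicity result in the strong form it is proved, namely that $W$ itself (or its normalized version) has a definite sign on $0<t<\infty$, and then to show that the extra terms arising at the second-derivative level preserve that sign.

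To make the sign analysis tractable I would expand the relevant $z$-derivatives of $\theta_j$ using the product/series representations and the known addition formulas, aiming for an expression where positivity is manifest for the range $0\le u<v<1$. A promising device is to introduce the logarithmic derivative $\frac{\partial}{\partial t}\log S_j = \frac{\theta_j'(u/2)}{\theta_j(u/2)}\cdot(\text{heat factor}) - \frac{\theta_j'(v/2)}{\theta_j(v/2)}\cdot(\text{heat factor})$ and study its monotonicity, since $\log$-convexity plus the sign of $S_j$ can be leveraged; however one must be careful that convexity of $S_j$ is weaker than log-convexity, so I would keep the direct second-derivative route as the primary line and use the logarithmic form only to organize terms. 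The key identity I would lean on is the one expressing $\frac{\theta_j''}{\theta_j}$ in terms of the Weierstrass $\wp$-function (or equivalently the second logarithmic derivative of $\theta_j$), which converts the problem into proving monotonicity in $z$ of a $\wp$-type function on the segment $(u/2,v/2)$.

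The main obstacle I anticipate is controlling the sign of the numerator of $\frac{\partial^2 S_j}{\partial t^2}$ uniformly over the whole half-line $0<t<\infty$, including the two limiting regimes $t\to 0^+$ (where $q\to 1$ and the series converge slowly, so one must pass to the modular-transformed ``dual'' theta series) and $t\to\infty$ (where $q\to 0$ and the leading terms dominate). In the bulk the sign should follow from a single well-chosen inequality, but near $t=0$ I expect to need the Jacobi imaginary transformation to rewrite $S_j$ in terms of $\theta$-functions of the reciprocal modulus, verify convexity there, and then stitch the two regimes together. Establishing that the finitely many ``error'' terms beyond the leading Wronskian do not reverse the sign anywhere on $(0,\infty)$ — rather than just asymptotically — will be the delicate part, and I would handle it by reducing to the positivity of an explicit one-variable function of $t$ and either factoring it or bounding it term-by-term using the rapid decay of $q^{n^2}$.
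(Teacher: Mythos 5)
Your opening moves match the paper's: use the heat equation to trade $t$-derivatives for $x$-derivatives, and reduce strict convexity to $\frac{\partial^{2}S_{j}}{\partial t^{2}}>0$, the negativity of $\frac{\partial S_{j}}{\partial t}$ being immediate from the known monotonicity. The genuine gap occurs exactly where you demote the logarithmic route to a mere ``organizing tool.'' The paper's essential structural step is to set $L_{j}=\log S_{j}$ and prove the \emph{stronger} statement $\frac{\partial^{2}L_{j}}{\partial t^{2}}>0$ (which suffices since $\frac{\partial^{2}S_{j}}{\partial t^{2}}=S_{j}\bigl(\frac{\partial^{2}L_{j}}{\partial t^{2}}+\bigl(\frac{\partial L_{j}}{\partial t}\bigr)^{2}\bigr)$ and $S_{j}>0$), because under the heat equation
\begin{equation*}
\frac{\partial^{2}L_{j}}{\partial t^{2}}
=\left[\frac{\theta_{j}^{(4)}(u/2)}{\theta_{j}(u/2)}-\left(\frac{\theta_{j}''(u/2)}{\theta_{j}(u/2)}\right)^{2}\right]
-\left[\frac{\theta_{j}^{(4)}(v/2)}{\theta_{j}(v/2)}-\left(\frac{\theta_{j}''(v/2)}{\theta_{j}(v/2)}\right)^{2}\right],
\end{equation*}
so the variables $u$ and $v$ decouple: the whole problem becomes showing that the single function $x\mapsto\theta_{j}^{(4)}(x)/\theta_{j}(x)-\bigl(\theta_{j}''(x)/\theta_{j}(x)\bigr)^{2}$ is strictly decreasing in $x$ on $(0,1/2)$ for each fixed $t$. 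Your primary route --- the quotient rule applied directly to $S_{j}$, organized around the Wronskian $W=\theta_{j}(u/2)\theta_{j}'(v/2)-\theta_{j}'(u/2)\theta_{j}(v/2)$ --- does not decouple: the second $t$-derivative contains cross terms, e.g. products of the first-order Wronskian-type numerator with $\frac{\partial}{\partial t}\theta_{j}(v/2)$, which mix both arguments, and knowing the sign of the first-order quantity says nothing about the sign of these combinations. You offer no concrete mechanism for them beyond the expectation that they ``preserve the sign.''

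Second, the hard part of the theorem is not where you locate it, and your proposed tool for it cannot work. Once the reduction to $x$-monotonicity is made, no asymptotic analysis in $t$ is needed at all: the paper handles every fixed $0<q<1$ uniformly by purely algebraic elliptic-function arguments --- converting everything to $\wp$ via $\bigl(\theta_{2}'(x)/\theta_{2}(x)\bigr)'=-\bigl(\wp(x-1/2)-c_{0}\bigr)$, proving two closed-form inequalities among $e_{1},e_{2},e_{3},c_{0},g_{2},g_{3}$ (the paper's Lemmas 2.1 and 2.2), and then running a case analysis on auxiliary functions $A_{1},A_{2},F_{j},G_{j}$ with the mean value theorem. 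By contrast, your plan for uniformity in $t$ --- verify the limits $t\to0^{+}$ (via the imaginary transformation) and $t\to\infty$, then ``stitch the two regimes together'' --- is not a proof mechanism: positivity in both limiting regimes gives nothing on the intermediate range of $t$, and term-by-term positivity arguments for theta series generically break down as $q\to1$. So the proposal is missing both the decoupling idea that makes the problem one-dimensional and any workable substitute for the elliptic-function analysis that constitutes the actual body of the paper's proof; as written it could not be completed along the lines you describe.
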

\section{Preliminary results}
In this section, we collect main ingredients all of which are subsequently required in the proofs of our results. We then prove certain lemmas also to be used in the later sections. Then in Section $3$, we prove Theorem \ref{drz} for $\frac{\partial S_{2}}{\partial t}$. 
Finally, Section $4$ is devoted to the proof of Theorem \ref{drz} for $\frac{\partial S_{3}}{\partial t}$.

We first start with some important properties of Weierstrass elliptic function. For $z\in\mathbb{C}$, let $\wp(z)$ denote the Weierstrass elliptic function with periods $1$ and $\tau$. It is known \cite[p.~376]{htf} that $\wp(z)$ maps the period parallelogram $R$ (rectangle in our case) with vertices $0$,  $\omega=1/2$, $\omega+\omega'=1/2+\tau/2$ and $\omega'=\tau/2$ conformally and one-to-one onto the lower half plane $\{\omega:\text{Im}\hspace{0.5mm}\omega<0\}$. Moreover, $\wp(z)$ is real and decreases from $\infty$ to $-\infty$ as $z$ describes the boundary of $R$ in the counterclockwise direction starting from $0$. It is known that $\wp(z)$ and $\wp'(z)$ are respectively even and odd functions of $z$.

Let $g_2$ and $g_3$ denote the invariants of $\wp(z)$. The following differential equations for $\wp$ are well-known and can be found in \cite[p.~332]{htf}:
\begin{align}\label{wpde}
{\wp'}^2(z)&=4\wp^3(z)-g_2\wp(z)-g_3,\nonumber\\
\wp''(z)&=6\wp^2(z)-\frac{g_2}{2},\nonumber\\
\wp'''(z)&=12\wp(z)\wp'(z).
\end{align}
The first equation in (\ref{wpde}) can also be represented in the form \cite[p.~331]{htf}
\begin{equation}\label{wpdea}
{\wp'}^2(z)=4\left(\wp(z)-e_1\right)\left(\wp(z)-e_2\right)\left(\wp(z)-e_3\right),
\end{equation}
where $e_1, e_2$ and $e_3$ are values of the $\wp(z)$ at $z=1/2,(\tau +1)/2$ and $\tau/2$ respectively \cite[p.~330]{htf}. As can be easily seen from (\ref{wpdea}), $\wp'(z)$ vanishes at these values of $z$. It is known that $e_3<e_2<e_1$, that $e_3<0$ and that $e_1>0$. Again, from \cite[p.~332]{htf}, we find that
\begin{align}\label{e1e2e3}
e_1&=-e_2-e_3\nonumber\\
g_2&=-4(e_1e_2+e_2e_3+e_3e_1)\nonumber\\
g_3&=4e_1e_2e_3.
\end{align}
Further, the quantities $e_1, e_2$ and $e_3$ are related to theta functions by \cite[p.~361]{htf}
\begin{align}\label{rel1}
(e_1-e_3)^{1/2}&=\pi\theta_{3}^2,\nonumber\\
(e_1-e_2)^{1/2}&=\pi\theta_{4}^2.
\end{align}
An important quantity which arises while expressing $\wp(z)$ in terms of theta functions is the following multiple of weight $2$ Eisenstein series \cite[p.~87, Equation 4.1.7]{spirit} given by
\begin{equation}\label{defc0}
c_{0}=c_{0}(q)=-\frac{\pi^2}{3}\left(1-24\sum_{n=1}^{\infty}\frac{nq^n}{1-q^n}\right).
\end{equation}
See \cite{dixsoly}. Using \cite[Equation 4.4]{dixsoly}, we have
\begin{equation}\label{ineq}
e_3<c_0<e_2<e_1.
\end{equation}
We note that $\theta_{2}(x|i\pi t)$ and $\theta_{3}(x|i\pi t)$ are related to $\theta_{1}(x|i\pi t)$ by following simple relations:
\begin{align}\label{rel}
\theta_{2}(x|i\pi t)&=\theta_{1}(1/2-x|i\pi t),\nonumber\\
\theta_{3}(x|i\pi t)&=iq^{-1/4}e^{-i\pi x}\theta_{1}(x|i\pi t).
\end{align}
Observe that from \cite[Equation (2.9)]{dixsoly}, we have on $0<x<1/2$,
\begin{equation*}\label{pp}
2\frac{\theta_{1}'(x)}{\theta_{1}(x)}+\frac{\wp'(x)}{\wp(x)-c_{0}}>0,
\end{equation*}
which when combined with (\ref{rel}) implies that on $0<x<1/2$,
\begin{equation}\label{nu1}
2\frac{\theta_{2}'(x)}{\theta_{2}(x)}+\frac{\wp'(x-1/2)}{\wp(x-1/2)-c_{0}}<0.
\end{equation}
Finally, we use the fact that each of the theta functions $\theta_{j}(x/2|i\pi t)$, $j=1, 2, 3$ and $4$, satisfies the heat equation \cite[Section 13.19]{htf}
\begin{equation}\label{he}
\frac{\partial\theta}{\partial t}=\frac{\partial^{2}\theta}{\partial x^2}.
\end{equation}
We now prove an inequality which will be instrumental in our proof of monotonicity of $S_2$ on $0<t<\infty$.
\begin{lemma}\label{ineq1}
Let $0<q<1$. Let $e_1, g_2, g_3\mbox{ and } c_0$ be defined as above. Then the following inequality holds:
\begin{equation}\label{exp1}
e_{1}^{2}(g_2-12c_{0}^{2})+e_1(6g_3+4g_2 c_0)+\left(\frac{g_2^2}{4}+g_2c_0^2+6g_3c_{0}\right)<0.
\end{equation}
\end{lemma}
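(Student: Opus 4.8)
The plan is to reduce the stated polynomial inequality to a classical inequality between complete elliptic integrals. First I would clear $g_2,g_3$ in favour of the $e_i$. Since $e_1+e_2+e_3=0$ by (\ref{e1e2e3}), one has $g_2=4(e_1^2-e_2e_3)$ and $g_3=4e_1e_2e_3$; writing $p:=e_2e_3$ and substituting these into the left-hand side of (\ref{exp1}) turns it into a polynomial in $e_1,p,c_0$. Collecting powers of $p$ exhibits it as $4$ times a quadratic in $p$, and the pleasant surprise is that the discriminant of this quadratic is the perfect fourth power $(e_1-c_0)^4$. Hence it factors, and using $p+2e_1^2=(e_1-e_2)(e_1-e_3)$ (which follows from $e_2+e_3=-e_1$) the left-hand side of (\ref{exp1}) equals
\begin{equation*}
4\bigl[(e_1-e_2)(e_1-e_3)-(e_1-c_0)^2\bigr](e_1-e_2)(e_1-e_3).
\end{equation*}
By (\ref{rel1}) the factor $(e_1-e_2)(e_1-e_3)=\pi^4\theta_3^4\theta_4^4$ is positive, so (\ref{exp1}) is equivalent to $(e_1-c_0)^2>(e_1-e_2)(e_1-e_3)$; since $c_0<e_1$ by (\ref{ineq}), this is in turn equivalent, after taking square roots and using (\ref{rel1}) again, to $e_1-c_0>\pi^2\theta_3^2\theta_4^2$.

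Next I would translate this last inequality into the language of complete elliptic integrals. Put $k=\theta_2^2/\theta_3^2$ and $k'=\theta_4^2/\theta_3^2$, so that $k^2+k'^2=1$ by Jacobi's identity and $K=K(k)=\tfrac{\pi}{2}\theta_3^2$. From (\ref{rel1}) together with $e_1+e_2+e_3=0$ one gets $e_1=\tfrac{\pi^2}{3}(\theta_3^4+\theta_4^4)$, while (\ref{defc0}) says $c_0=-\tfrac{\pi^2}{3}E_2$ for the weight-$2$ Eisenstein series $E_2$, which admits the classical representation $E_2=\theta_3^4\bigl(3E/K+k^2-2\bigr)$ with $E=E(k)$. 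Substituting both expressions and simplifying with $k^2+k'^2=1$ collapses the combination to the clean identities
\begin{equation*}
e_1-c_0=4EK,\qquad \pi^2\theta_3^2\theta_4^2=4K^2k'.
\end{equation*}
Thus $e_1-c_0>\pi^2\theta_3^2\theta_4^2$ is exactly $E>k'K$, and the entire lemma has been reduced to this single classical inequality for $0<k<1$.

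Finally I would prove $E(k)>k'K(k)$ on $0<k<1$. Set $f(k)=E-k'K$. Then $f(0)=\tfrac{\pi}{2}-\tfrac{\pi}{2}=0$, and using the standard derivatives $dE/dk=(E-K)/k$, $dK/dk=(E-k'^2K)/(kk'^2)$ and $dk'/dk=-k/k'$, a short computation gives
\begin{equation*}
f'(k)=\frac{(1-k')(K-E)}{k\,k'}.
\end{equation*}
Since $0<k'<1$ and $K>E>0$ (the integrand of $K$ dominates that of $E$ pointwise), we have $f'>0$ on $(0,1)$, whence $f(k)>f(0)=0$ there. As $k$ increases from $0$ to $1$ while $q$ runs over $(0,1)$, this covers every case and completes the proof.

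The routine but essential step is the factorization in the first paragraph; the real content lies in the second, where recognizing $e_1-c_0=4EK$ — equivalently, importing the elliptic-integral form of $E_2$ — converts an apparently delicate $q$-series inequality with sign-changing coefficients into the transparent classical fact $E>k'K$. The point I expect to require the most care is the correct normalization of $E_2$ (its nome is $e^{2\pi i\tau}$, the square of the theta nome $q=e^{\pi i\tau}$), since an error here reverses the inequality and would make $(e_1-c_0)^2<(e_1-e_2)(e_1-e_3)$ for small $q$.
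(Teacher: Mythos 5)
Your proof is correct, and its second half takes a genuinely different route from the paper's. The first half is the paper's reduction in different clothing: the paper views the left-hand side of (\ref{exp1}) as a quadratic in $c_0$ and factors it through the discriminant $4(e_1-e_2)(e_1-e_3)$, landing on exactly your factorization $4\bigl[(e_1-e_2)(e_1-e_3)-(e_1-c_0)^2\bigr](e_1-e_2)(e_1-e_3)$, written there as $-4(e_1-e_2)(e_1-e_3)(c_0-e_1-\pi^2\theta_3^2\theta_4^2)(c_0-e_1+\pi^2\theta_3^2\theta_4^2)$; your quadratic-in-$p$ bookkeeping, including the discriminant $(e_1-c_0)^4$, checks out. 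Both proofs thus hinge on $e_1-c_0>\pi^2\theta_3^2\theta_4^2$, and here you diverge. The paper stays with $q$-series: via $\theta_3\theta_4=\theta_4^2(0|2\tau)$ and Hirschhorn's formula (\ref{4p}) it writes $\pi^2\theta_3^2\theta_4^2=\pi^2+8\pi^2\sum_{n\geq1}(-1)^nq^{2n}/(1+q^{2n})^2$, and compares this term by term with $e_1-c_0=\pi^2+8\pi^2\sum_{n\geq1}q^{2n}/(1+q^{2n})^2$ from \cite[Eq.~4.1]{dixsoly}; since the two Lambert series differ only in signs, the inequality is immediate, with no calculus and no identities beyond those already in play. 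You instead pass to elliptic integrals, $e_1-c_0=4EK$ and $\pi^2\theta_3^2\theta_4^2=4K^2k'$, reducing everything to the classical inequality $E>k'K$, which you prove by the (correct) computation $f'(k)=(1-k')(K-E)/(kk')>0$. Your identities are right: with $\theta_3^2=2K/\pi$, $e_1=\tfrac{4}{3}K^2(2-k^2)$ and Ramanujan's $P(q^2)=(2K/\pi)^2(3E/K+k^2-2)$ one indeed gets $e_1-c_0=4EK$, and this is also consistent with (\ref{bf2}) through the Lambert expansion of $EK$. The trade-off is that your finish imports a nontrivial Eisenstein-series identity plus the derivative formulas for $E$ and $K$, whereas the paper's finish is essentially self-contained; in exchange you obtain the memorable closed form $e_1-c_0=4EK$ and a reduction to a transparent textbook fact. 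Your caution about the nome is also vindicated: as printed, (\ref{defc0}) uses $q^n$ (nome $e^{\pi i\tau}$), but consistency with (\ref{bf2}) forces the Eisenstein series there to be taken in the nome $q^2=e^{2\pi i\tau}$, which is precisely the normalization in which your identity $e_1-c_0=4EK$ holds.
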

\begin{proof}
Let $T(q)$ denote the left-hand side of (\ref{exp1}). We view $T(q)$ as a quadratic function in $c_0$ rather than that in $e_1$, i.e.,
\begin{equation}\label{exp2}
T(q)=(g_2-12e_1^{2})c_0^2+(6g_3+4g_2e_1)c_0+\left(\frac{g_2^2}{4}+g_2e_1^2+6g_3e_{1}\right).
\end{equation}
Employing (\ref{e1e2e3}) in (\ref{exp2}), we see that
\begin{align}\label{exp21}
T(q)&=-4(2e_{2}^{2}+5e_2e_3+2e_{3}^{2})c_{0}^{2}-8(2e_{2}^3+7e_{2}^2e_3+7e_2e_{3}^3+2e_{3}^3)c_0\nonumber\\
&\quad+(8e_{2}^4+44e_{2}^3e_3+76e_{2}^2e_{3}^2+44e_{2}e_{3}^3+8e_{3}^4)\nonumber\\
&=-4(2e_2+e_3)(e_2+2e_3)(c_{0}^{2}+2(e_2+e_3)c_{0}-(e_{2}^2+3e_2e_3+e_{3}^2)).
\end{align}
The quadratic in $c_0$ in the last expression in (\ref{exp21}) has discriminant
\begin{equation*}
4(e_2+e_3)^2+4(e_2^2+3e_2e_3+e_3^2)=4(2e_2+e_3)(e_2+2e_3)=4(e_1-e_2)(e_1-e_3),
\end{equation*}
where we utilized (\ref{e1e2e3}) in the last equality. Hence,
\begin{align}\label{exp3}
T(q)=&-4(e_1-e_2)(e_1-e_3)\left(c_0-\left(-(e_2+e_3)+\pi ^2\theta_{3}^2\theta_{4}^2\right)\right)\left(c_0-\left(-(e_2+e_3)-\pi ^2\theta_{3}^2\theta_{4}^2\right)\right)\nonumber\\
&\quad=-4(e_1-e_2)(e_1-e_3)(c_0-e_1-\pi^2\theta_{3}^2\theta_{4}^2)(c_0-e_1+\pi^2\theta_{3}^2\theta_{4}^2),
\end{align} 
where we invoked (\ref{rel1}) in the first equality and (\ref{e1e2e3}) in the second.
Using (\ref{ineq}) and (\ref{exp3}), it suffices to show that $e_1-c_0>\pi^2\theta_3^2\theta_4^2$. To that end, observe that using \cite[p.~15, Equation (1.3.32)]{spirit}, we have
\begin{equation}\label{th4}
\theta_{3}\theta_{4}=\theta_{4}^{2}(0|2\tau).
\end{equation}
Also, from \cite[Equation 4]{hir},
\begin{equation}\label{4p}
\theta_{4}^{4}=1+8\sum_{n=1}^{\infty}\frac{(-1)^nq^n}{(1+q^n)^2}.
\end{equation}
Using (\ref{th4}) and (\ref{4p}), we deduce that
\begin{equation}\label{bf1}
\pi^2\theta_3^2\theta_4^2=\pi^2+8\pi^2\sum_{n=1}^{\infty}\frac{(-1)^nq^{2n}}{(1+q^{2n})^2}.
\end{equation}
But from \cite[Equation 4.1]{dixsoly},
\begin{equation}\label{bf2}
e_1-c_0=\pi^2+8\pi^2\sum_{n=1}^{\infty}\frac{q^{2n}}{(1+q^{2n})^2}.
\end{equation}
Thus (\ref{bf1}) and (\ref{bf2}) along with the fact that $0<q<1$ imply the inequality $e_1-c_0>\pi\theta_3^2\theta_4^2$. This proves (\ref{exp1}).
\end{proof}
\begin{lemma}\label{ineqq2}
Let $0<q<1$. Let $e_2, g_2, g_3\mbox{ and } c_0$ be defined as above. Then the following inequality holds:
\begin{equation}\label{expp2}
e_{2}^{2}(g_2-12c_{0}^{2})+e_2(6g_3+4g_2 c_0)+\left(\frac{g_2^2}{4}+g_2c_0^2+6g_3c_{0}\right)>0.
\end{equation}
\end{lemma}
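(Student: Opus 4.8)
The plan is to run the proof of Lemma \ref{ineq1} essentially verbatim, exploiting that the left-hand side $T_2(q)$ of (\ref{expp2}) is obtained from the left-hand side of (\ref{exp1}) by the single replacement $e_1 \mapsto e_2$, while $g_2$ and $g_3$ are symmetric functions of $e_1, e_2, e_3$. First I would set $T_2(q)$ equal to the left-hand side of (\ref{expp2}) and, exactly as in passing from (\ref{exp1}) to (\ref{exp2}), regard it as a quadratic in $c_0$:
\begin{equation*}
T_2(q) = (g_2 - 12e_2^2)c_0^2 + (6g_3 + 4g_2 e_2)c_0 + \left(\frac{g_2^2}{4} + g_2 e_2^2 + 6g_3 e_2\right).
\end{equation*}

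Next I would invoke (\ref{e1e2e3}), this time using $e_1+e_2+e_3=0$ to eliminate $e_2$ via $e_2=-e_1-e_3$ everywhere, including inside $g_2$ and $g_3$. Since the manipulation leading to (\ref{exp21}) is purely algebraic in the two retained roots, the identical computation with the pair $(e_1,e_3)$ in place of $(e_2,e_3)$ yields
\begin{equation*}
T_2(q) = -4(2e_1 + e_3)(e_1 + 2e_3)\left(c_0^2 + 2(e_1+e_3)c_0 - (e_1^2 + 3e_1 e_3 + e_3^2)\right).
\end{equation*}
(One checks this directly; e.g. for $e_1=2,e_2=1,e_3=-3$ both sides equal $16c_0^2-32c_0+80$.)

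The decisive step is a sign count, and it is here that the argument becomes strictly easier than that of Lemma \ref{ineq1}. From $e_1+e_2+e_3=0$ one has $2e_1+e_3 = e_1-e_2>0$ and $e_1+2e_3 = e_3-e_2<0$, so the prefactor $-4(2e_1+e_3)(e_1+2e_3) = 4(e_1-e_2)(e_2-e_3)$ is positive. Moreover the discriminant of the remaining quadratic in $c_0$ equals $4(2e_1+e_3)(e_1+2e_3) = -4(e_1-e_2)(e_2-e_3)<0$. Since this quadratic has leading coefficient $1>0$ and negative discriminant, it is positive for every real $c_0$; multiplying two positive quantities gives $T_2(q)>0$, which is (\ref{expp2}).

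I expect no genuine obstacle. In Lemma \ref{ineq1} the distinguished root $e_1$ exceeds both $e_2$ and $e_3$, so the analogous quadratic in $c_0$ has two real roots $e_1\pm\pi^2\theta_3^2\theta_4^2$, forcing the delicate location of $c_0$ below the smaller root through the theta-function inequality $e_1-c_0>\pi^2\theta_3^2\theta_4^2$ coming from (\ref{bf1})--(\ref{bf2}). Here $e_2$ lies strictly between $e_1$ and $e_3$, and this intermediacy is exactly what turns the discriminant negative, so the quadratic never vanishes and no auxiliary estimate on $e_2-c_0$ is needed at all. The only care required is the bookkeeping in the substitution step to confirm that the factored shape of (\ref{exp21}) indeed survives the relabeling $(e_2,e_3)\mapsto(e_1,e_3)$.
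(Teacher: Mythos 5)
Your proof is correct and is essentially the paper's own argument: the paper likewise views the expression as a quadratic in $c_0$, uses $e_1+e_2+e_3=0$ to factor it as $4(e_1-e_2)(e_2-e_3)\left((c_0-e_2)^2+(e_1-e_2)(e_2-e_3)\right)$, and concludes positivity from $e_3<e_2<e_1$ alone, with no estimate on the location of $c_0$. The only cosmetic differences are that the paper eliminates $e_1$ rather than $e_2$ and phrases positivity of the quadratic via completing the square instead of a negative discriminant; since $-(e_1+e_3)=e_2$, your quadratic and the paper's are literally the same, so the two arguments coincide.
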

\begin{proof}
Let $U(q)$ denote the left-hand side of (\ref{expp2}). From (\ref{e1e2e3}) and (\ref{ineq}), 
\begin{align*}\label{expp3}
U(q)&=(g_2-12e_2^{2})c_0^2+(6g_3+4g_2e_2)c_0+\left(\frac{g_2^2}{4}+g_2e_2^2+6g_3e_{2}\right)\nonumber\\
&=-4(e_2-e_3)(2e_2+e_3)(c_0^2-2e_2c_0-(e_2^2-e_2e_3-e_3^2))\nonumber\\
&=4(e_1-e_2)(e_2-e_3)((c_0-e_2)^2+(e_1-e_2)(e_2-e_3))\nonumber\\
&>0.
\end{align*}
 \end{proof}
\section{Proof of monotonicity of $\displaystyle\frac{\partial S_{2}}{\partial t}$}
From \cite[Theorem 1]{dixsoly}, since $S_{2}(u,v;t)$ is decreasing on $0<t<\infty$, we see at once that $\frac{\partial S_{2}}{\partial t}<0$. Let $L_2:=\log S_{2}(u,v;t)$. Observe that 
\begin{equation}\label{drzz}
\frac{\partial S_2}{\partial t}=S_{2}\frac{\partial L_{2}}{\partial t}.
\end{equation}
In order to show that $\frac{\partial S_2}{\partial t}$ is increasing on $0<t<\infty$, it suffices to show that $\frac{\partial^{2}S_2}{\partial t^{2}}>0$. Now from (\ref{drzz}),
\begin{equation*}
\frac{\partial^{2}S_2}{\partial t^{2}}=\frac{\partial}{\partial t}\left(S_{2}\frac{\partial L}{\partial t}\right)=S_2\left(\frac{\partial^{2}L_2}{\partial t^{2}}+\left(\frac{\partial L_2}{\partial t}\right)^{2}\right).
\end{equation*}
We claim that $\frac{\partial^{2}L_2}{\partial t^{2}}>0$ whence we will be done. Using (\ref{he}) twice, we see that
\begin{equation*}
\frac{\partial^{2}}{\partial t^{2}}\theta_2(x/2|i\pi t)=\frac{\partial}{\partial t}\left(\frac{\partial^{2}}{\partial x^{2}}\theta_2(x/2|i\pi t)\right)=\frac{\partial^{2}}{\partial x^{2}}\left(\frac{\partial}{\partial t}\theta_2(x/2|i\pi t)\right)=\frac{\partial^{4}}{\partial x^{4}}\theta_2(x/2|i\pi t).
\end{equation*}
Hence,
\begin{align*}\label{secderlt}
\frac{\partial^{2}L_2}{\partial t^{2}}&=\frac{\partial}{\partial t}\left(\frac{\frac{\partial}{\partial t}\theta_2(u/2|i\pi t)}{{\theta_2(u/2|i\pi t)}}-\frac{\frac{\partial}{\partial t}\theta_2(v/2|i\pi t)}{{\theta_2(v/2|i\pi t)}}\right)\nonumber\\
&=\frac{\theta_{2}^{(4)}(u/2|i\pi t)}{\theta_{2}(u/2|i\pi t)}-\frac{\theta_{2}^{(4)}(v/2|i\pi t)}{\theta_{2}(v/2|i\pi t)}\nonumber\\
&\quad-\bigg(\bigg(\frac{\theta_{2}''(u/2|i\pi t)}{\theta_{2}(u/2|i\pi t)}\bigg)^{2}
-\bigg(\frac{\theta_{2}''(v/2|i\pi t)}{\theta_{2}(v/2|i\pi t)}\bigg)^{2}\bigg).
\end{align*}
Thus it suffices to show that the function $\theta_{2}^{(4)}(x|i\pi t)/\theta_{2}(x|i\pi t)-\left(\theta_{2}''(x|i\pi t)/\theta_{2}(x|i\pi t)\right)^{2}$ decreases on $0<x<1/2$. From now on, we fix $t$ where $0<t<\infty$ and henceforth suppress the dependence of $\theta_2(x/2|i\pi t)$ on $t$. From (\ref{rel}) and the relation \cite[Equation (2.6)]{dixsoly}
\begin{equation}\label{weipt1}
\left(\frac{\theta_{1}'(x)}{\theta_{1}(x)}\right)'=-\left(\wp(x)-c_{0}\right),
\end{equation}
we find that
\begin{equation}\label{weipt2}
\left(\frac{\theta_{2}'(x)}{\theta_{2}(x)}\right)'=-\left(\wp\left(x-1/2\right)-c_{0}\right),
\end{equation}
since $\wp(x)$ is an even function of $x$. Then by a repeated application of quotient rule for derivatives and (\ref{weipt2}), it is easy to see that the following are true:
\begin{align*}
\frac{\theta_{2}''(x)}{\theta_{2}(x)}&=\left(\frac{\theta_{2}'(x)}{\theta_{2}(x)}\right)^{2}-\left(\wp\left(x-1/2\right)-c_{0}\right),\nonumber\\
\frac{\theta_{2}'''(x)}{\theta_{2}(x)}&=\left(\frac{\theta_{2}'(x)}{\theta_{2}(x)}\right)^{3}-3\frac{\theta_{2}'(x)}{\theta_{2}(x)}\left(\wp\left(x-1/2\right)-c_{0}\right)-\wp'\left(x-1/2\right),\nonumber\\
\frac{\theta_{2}^{(4)}(x)}{\theta_{2}(x)}&=\left(\frac{\theta_{2}'(x)}{\theta_{2}(x)}\right)^{4}-6\left(\frac{\theta_{2}'(x)}{\theta_{2}(x)}\right)^{2}\left(\wp\left(x-1/2\right)-c_{0}\right)-4\frac{\theta_{2}'(x)}{\theta_{2}(x)}\wp'\left(x-1/2\right)\nonumber\\
&\quad+3\left(\wp\left(x-1/2\right)-c_{0}\right)^2-\wp''\left(x-1/2\right),
\end{align*}
from which it easily follows that
\begin{align*}
\frac{\theta_{2}^{(4)}(x)}{\theta_{2}(x)}-\left(\frac{\theta_{2}''(x)}{\theta_{2}(x)}\right)^{2}&=-4\left(\frac{\theta_{2}'(x)}{\theta_{2}(x)}\right)^{2}\left(\wp\left(x-1/2\right)-c_{0}\right)+2\left(\wp\left(x-1/2\right)-c_{0}\right)^2\nonumber\\
&\quad-4\frac{\theta_{2}'(x)}{\theta_{2}(x)}\wp'\left(x-1/2\right)-\wp''\left(x-1/2\right).
\end{align*}
Again using (\ref{weipt2}), we find that
\begin{align*}
\frac{d}{dx}\left(\frac{\theta_{2}^{(4)}(x)}{\theta_{2}(x)}-\left(\frac{\theta_{2}''(x)}{\theta_{2}(x)}\right)^{2}\right)&=8\frac{\theta_{2}'(x)}{\theta_{2}(x)}\left(\wp\left(x-1/2\right)-c_{0}\right)^2-4\left(\frac{\theta_{2}'(x)}{\theta_{2}(x)}\right)^2\wp'\left(x-1/2\right)\nonumber\\
&\quad+8\left(\wp\left(x-1/2\right)-c_{0}\right)\wp'\left(x-1/2\right)-4\frac{\theta_{2}'(x)}{\theta_{2}(x)}\wp''\left(x-1/2\right)\nonumber\\
&\quad-\wp'''\left(x-1/2\right).
\end{align*}
From the monotonicity of $\wp$ along the boundary of the rectangular lattice as mentioned in Section 2, in the case at hand, we have in particular that $\wp(x)$ is strictly decreasing on $0<x<1/2$. Hence $\wp(1/2-x)$ is strictly increasing on $0<x<1/2$. Since $\wp(1/2-x)=\wp(x-1/2)$, this implies that $\wp'(x-1/2)>0$ on $0<x<1/2$. Define the function $F_{2}(x)$ as
\begin{align}\label{capf2}
F_{2}(x)&:=\frac{1}{\wp'(x-1/2)}\frac{d}{dx}\left(\frac{\theta_{2}^{(4)}(x)}{\theta_{2}(x)}-\left(\frac{\theta_{2}''(x)}{\theta_{2}(x)}\right)^{2}\right)\nonumber\\
&=8\frac{\theta_{2}'(x)}{\theta_{2}(x)}\frac{\left(\wp\left(x-1/2\right)-c_{0}\right)^2}{\wp'\left(x-1/2\right)}-4\left(\frac{\theta_{2}'(x)}{\theta_{2}(x)}\right)^2+8\left(\wp\left(x-1/2\right)-c_{0}\right)\nonumber\\
&\quad-4\frac{\theta_{2}'(x)}{\theta_{2}(x)}\frac{\wp''\left(x-1/2\right)}{\wp'\left(x-1/2\right)}-\frac{\wp'''\left(x-1/2\right)}{\wp'\left(x-1/2\right)}.
\end{align}
It suffices to prove that $F_{2}(x)<0$. We prove this by showing that $F_{2}(1/2)=0$ and $F_{2}'(x)>0$, since then, the mean value theorem implies that for any $x\in(0,1/2)$, $F_{2}(x)-F_{2}(1/2)=F_{2}'(c)(x-1/2)$ for some $c\in(x,1/2)$. We begin by showing $F_{2}(1/2)=0$. We require the following series expansions in order to establish this. First, from \cite[p.~358, Section 13.19]{htf},
\begin{align}\label{fth}
\frac{\theta_{2}'(z)}{\theta_{2}(z)}&=-\pi\tan\pi z+4\pi\sum_{n=1}^{\infty}(-1)^n\frac{q^{2n}}{1-q^{2n}}\sin 2n\pi z\nonumber\\
&=\left(\frac{1}{z-1/2}-\frac{\pi^2}{3}(z-1/2)-\cdots\right)+4\pi\sum_{n=1}^{\infty}(-1)^n\frac{q^{2n}}{1-q^{2n}}\sin 2n\pi z.
\end{align}
Further, the Laurent series expansions of $\wp(z-1/2)$ and $\wp'(z-1/2)$ around $z=1/2$ are as follows \cite[p.~330, Section 13.12]{htf}.
\begin{align}\label{wps}
\wp(z-1/2)&=\frac{1}{(z-1/2)^2}+\frac{g_2(z-1/2)^2}{2^2.5}+\frac{g_3(z-1/2)^4}{2^2.7}+\frac{g_2^2(z-1/2)^6}{2^4.3.5^2}+...,\nonumber\\
\wp'(z-1/2)&=\frac{-2}{(z-1/2)^3}+\frac{g_2(z-1/2)}{10}+\frac{g_3(z-1/2)^3}{7}+\frac{g_2^2(z-1/2)^5}{2^3.5^2}+... .
\end{align}
Using (\ref{fth}), (\ref{wps}), the third differential equation in (\ref{wpde}) and simplifying, we find that $F_{2}(1/2)=0$. Differentiating both sides of (\ref{capf2}) with respect to $x$, using (\ref{wpde}), (\ref{weipt2}) and simplifying, we get
\begin{align}\label{capf3}
\frac{F_{2}^{'}(x)}{4}&=\frac{\theta_{2}'(x)}{\theta_{2}(x)}\cdot\frac{\wp^{2}(x-1/2)\left(g_{2}-12c_{0}^{2}\right)+\wp\left(x-1/2\right)\left(6g_{3}+4g_{2}c_{0}\right)+\left(6g_{3}c_{0}+g_{2}c_{0}^{2}+\frac{g_{2}^{2}}{4}\right)}{{\wp'}^{2}(x-1/2)}\nonumber\\
&\quad+\frac{\wp\left(x-1/2\right)\left(g_2/2-6c_{0}^{2}\right)+g_{3}+2c_{0}^{3}+g_{2}c_{0}/2}{\wp' (x-1/2)}.
\end{align}
Now we show that $F_{2}'(x)>0$. Let 
\begin{align}\label{a1}
A_{1}(x)&:=\wp(x-1/2)\left(g_2/2-6c_{0}^{2}\right)+g_{3}+2c_{0}^{3}+g_{2}c_{0}/2,\nonumber\\
A_{2}(x)&:=\wp^{2}(x-1/2)\left(g_{2}-12c_{0}^{2}\right)+\wp\left(x-1/2\right)\left(6g_{3}+4g_{2}c_{0}\right)+\left(6g_{3}c_{0}+g_{2}c_{0}^{2}+g_{2}^{2}/4\right).
\end{align}
By Remark 1 in \cite{dixsoly}, we have
\begin{equation}\label{rema}
e_1<\frac{-(2g_3+4c_0^{3}+g_2c_0)}{g_2-12c_0^2}.
\end{equation}
This along with the fact that $\wp(x-1/2)$ is strictly increasing on $0<x<1/2$ from $e_1$ to $\infty$ implies that $A_{1}$ has a unique zero, say $a_1$ in $(0,1/2)$. Now Lemma 2 from \cite{dixsoly} implies that $g_{2}-12c_{0}^{2}>0$. This along with the fact that $\wp\left(x-1/2\right)\rightarrow\infty$ as $x\rightarrow{\frac{1}{2}}^{-}$ implies that $A_{2}(x)\to\infty$ as $x\rightarrow{\frac{1}{2}}^{-}$. 
Using the fact that $\wp(1/2)=\wp(-1/2)=e_1$ and Lemma \ref{ineq1}, we have $A_{2}(0)<0$. Since $A_{2}$ is quadratic in $\wp(x-1/2)$ and $\wp(x-1/2)$ is strictly increasing on $0<x<1/2$, there exists a unique value $a_2$ of $x$ in $(0,1/2)$ such that $A_{2}(a_2)=0$. Let $P:=\wp(a_2-1/2)$. Note that $a_2$ is not a double root of $A_{2}$. Next, $P$ has two possibilities, say,
\begin{equation*}\label{p2p}
P=P_{1}:=\frac{-6g_{3}-4g_{2}c_{0}-\sqrt{\Delta}}{2(g_{2}-12c_{0}^{2})}\mbox{ or } P=P_{2}:=\frac{-6g_{3}-4g_{2}c_{0}+\sqrt{\Delta}}{2(g_{2}-12c_{0}^{2})},
\end{equation*}
where 
\begin{equation}\label{del}
\Delta:=(6g_{3}+4g_{2}c_{0})^{2}-4(g_{2}-12c_{0}^{2})(6g_{3}c_{0}+g_{2}c_{0}^{2}+g_{2}^{2}/4)>0,
\end{equation}
the last inequality coming from the above discussion. We now claim that $P=P_{2}$. Now 
\begin{equation}\label{rema1}
P_{2}>\frac{-6g_{3}-4g_{2}c_{0}}{2(g_{2}-12c_{0}^{2})}
\end{equation}
and
\begin{equation}\label{rema2}
\frac{-6g_{3}-4g_{2}c_{0}}{2(g_{2}-12c_{0}^{2})}+\frac{2g_3+4c_0^{3}+g_2c_0}{g_2-12c_0^2}=\frac{-g_{3}-g_{2}c_{0}/2-2c_0^{3}}{(g_{2}-12c_{0}^{2})}+\frac{6c_0^{3}-g_2c_0/2}{g_2-12c_0^2}>\frac{e_1-c_0}{2}>0,
\end{equation}
where we utilized (\ref{rema}) in the penultimate step and (\ref{ineq}) in the ultimate step. Therefore, by (\ref{rema}), (\ref{rema1}) and (\ref{rema2}),
\begin{equation}\label{rema3}
e_1<\frac{-(2g_3+4c_0^{3}+g_2c_0)}{g_2-12c_0^2}<P_{2}.
\end{equation}
This shows that $\wp(x-1/2)$ attains the value $P_{2}$ for a unique $x$ in the interval $(0,1/2)$. This combined with the facts that $P_{1}<P_{2}$ and $A_{2}$ has a unique root in $0<x<1/2$ implies that $P=P_{2}$.\\

\textbf{Remark 1.} The above discussion implies that $P_{1}<e_1<P_{2}$. As the real period of $\wp$ is $1$, this tells us that there is no real number $x$ such that $\wp(x-1/2)=P_{1}$.\\

Using $P=P_{2}$ and (\ref{rema3}), it is clear that $0<a_{1}<a_{2}<1/2$. Figure 3 shows the graphs of $10A_{1}(x)$\footnote{The graph of $A_{1}(x)$ is scaled by the factor of $10$ for better view without changing the fact $0<a_1<a_2<1/2$.} and $A_{2}(x)$.
\begin{figure}
\centering
\epsfig{file=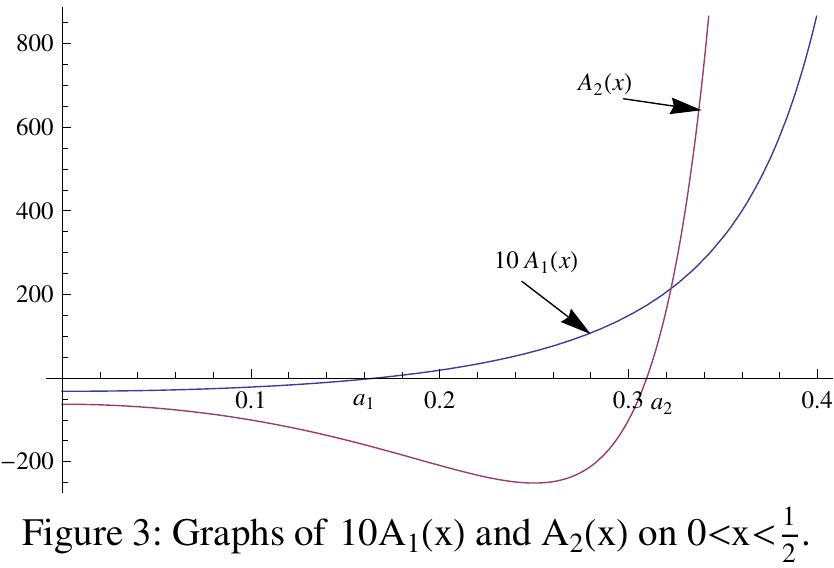, width=0.5\linewidth,clip=}
\end{figure}
Define
\begin{align}\label{G2}
G_{2}(x)&:=\frac{F_{2}'(x){\wp'}^{2}(x-1/2)}{4A_{2}(x)}\nonumber\\
&=\frac{\theta_{2}'(x)}{\theta_{2}(x)}+\frac{\wp' (x-1/2)\left(\wp\left(x-1/2\right)+\frac{2g_3+4c_{0}^{3}+g_2 c_0}{g_2-12c_{0}^{2}}\right)}{2\left(\wp^{2}(x-1/2)+\wp(x-1/2)\frac{6g_3+4g_2 c_0}{g_2-12c_{0}^{2}}+\frac{6g_{3}c_{0}+g_{2}c_{0}^{2}+g_{2}^{2}/4}{g_2-12c_{0}^{2}}\right)}.
\end{align}
Next, we differentiate extreme sides of (\ref{G2}) with respect to $x$ and use (\ref{weipt2}) so that $\theta_{2}'(x)/\theta_{2}(x)$ is eliminated from the right-hand side of (\ref{G2}) and we have everything in terms of $\wp$ and $\wp'$. This along with the second differential equation in (\ref{wpde}) gives
{\allowdisplaybreaks\begin{align}\label{capf5}
G_{2}'(x)&=-(\wp(x-1/2)-c_0)+\frac{(6\wp^{2}(x-1/2)-\frac{g_2}{2})\left(\wp(x-1/2)+\frac{2g_3+4c_{0}^{3}+g_2 c_0}{g_2-12c_{0}^{2}}\right)}{2\left(\wp^{2}(x-1/2)+\wp(x-1/2)\frac{6g_3+4g_2 c_0}{g_2-12c_{0}^{2}}+\frac{6g_{3}c_{0}+g_{2}c_{0}^{2}+g_{2}^{2}/4}{g_2-12c_{0}^{2}}\right)}\nonumber\\
&\quad+\frac{{\wp'}^{2}(x-1/2)}{2\left(\wp^{2}(x-1/2)+\wp(x-1/2)\frac{6g_3+4g_2 c_0}{g_2-12c_{0}^{2}}+\frac{6g_{3}c_{0}+g_{2}c_{0}^{2}+g_{2}^{2}/4}{g_2-12c_{0}^{2}}\right)}\nonumber\\
&\quad-\frac{{\wp'}^{2}(x-1/2)\left(\wp(x-1/2)+\frac{2g_3+4c_{0}^{3}+g_2 c_0}{g_2-12c_{0}^{2}}\right)\left(2\wp(x-1/2)+\frac{6g_3+4g_2 c_0}{g_2-12c_{0}^{2}}\right)}{2\left(\wp^{2}(x-1/2)+\wp(x-1/2)\frac{6g_3+4g_2 c_0}{g_2-12c_{0}^{2}}+\frac{6g_{3}c_{0}+g_{2}c_{0}^{2}+g_{2}^{2}/4}{g_2-12c_{0}^{2}}\right)^2}.
\end{align}}
Simplifying the first three terms of (\ref{capf5}), we obtain
\begin{align}\label{capf6}
G_{2}'(x)&=
\frac{{\wp'}^{2}(x-1/2)}{\left(\wp^{2}(x-1/2)+\wp(x-1/2)\frac{6g_3+4g_2 c_0}{g_2-12c_{0}^{2}}+\frac{6g_{3}c_{0}+g_{2}c_{0}^{2}+g_{2}^{2}/4}{g_2-12c_{0}^{2}}\right)}\nonumber\\
&\quad-\frac{{\wp'}^{2}(x-1/2)\left(\wp(x-1/2)+\frac{2g_3+4c_{0}^{3}+g_2 c_0}{g_2-12c_{0}^{2}}\right)\left(2\wp(x-1/2)+\frac{6g_3+4g_2 c_0}{g_2-12c_{0}^{2}}\right)}{2\left(\wp^{2}(x-1/2)+\wp(x-1/2)\frac{6g_3+4g_2 c_0}{g_2-12c_{0}^{2}}+\frac{6g_{3}c_{0}+g_{2}c_{0}^{2}+g_{2}^{2}/4}{g_2-12c_{0}^{2}}\right)^2}.
\end{align}

Consider three cases: $0<x<a_1$, $a_1\leq x\leq a_2$ and $a_2<x<1/2$.\\

\textbf{Case 1:} $0<x<a_1$.

Then, $A_{1}(x)<0$ and $A_{2}(x)<0$. We show that $G_{2}(x)<0$. Note that from (\ref{wpdea}), (\ref{fth}), (\ref{rema}) and Lemma \ref{ineq1}, it readily follows that $G_{2}(0)=0$. 
Since $A_{1}(x)<0$, $A_{2}(x)<0$ and $g_2-12c_0^{2}>0$, we have
\begin{align}
&\wp\left(x-1/2\right)+\frac{2g_3+4c_{0}^{3}+g_2 c_0}{g_2-12c_{0}^{2}}<0,\label{aftA1A21}\\
&\wp^{2}(x-1/2)+\wp(x-1/2)\frac{6g_3+4g_2 c_0}{g_2-12c_{0}^{2}}+\frac{6g_{3}c_{0}+g_{2}c_{0}^{2}+g_{2}^{2}/4}{g_2-12c_{0}^{2}}<0.\label{aftA1A22}
\end{align}
From (\ref{aftA1A21}) and (\ref{rema2}), we see that
\begin{align}\label{jlt}
2\wp(x-1/2)+\frac{6g_3+4g_2 c_0}{g_2-12c_{0}^{2}}<0.
\end{align}
Therefore, (\ref{aftA1A21}), (\ref{aftA1A22}) and (\ref{jlt}) imply that $G_{2}'(x)<0$. By the mean value theorem, for any $x\in(0,a_1)$, $G_{2}(x)=xG_{2}'(d)$ for some $d\in(0,x)$. Hence $G_{2}(x)<0$. Thus $F_{2}'(x)>0$ in $0<x<a_1$.\\

\textbf{Case 2:} $a_1\leq x\leq a_2$.

Note that $A_{1}(a_{1})=0$, $A_{2}(a_{1})<0$, $A_{1}(a_{2})>0$ and $A_{2}(a_{2})=0$. Also, $A_{1}(x)>0$ and $A_{2}(x)<0$ when $a_{1}<x<a_{2}$. 

Since $\wp(x-1/2)$ is strictly increasing on $0<x<1/2$, we have $\wp'(x-1/2)>0$ and $\wp(x-1/2)-c_0>e_1-c_0>0$, where we invoked (\ref{ineq}) in the last step. This along with (\ref{nu1}) shows that $\theta_{2}'(x)/\theta_{2}(x)<0$ on $0<x<1/2$.  Using all of the above facts and (\ref{capf3}), we observe that $F_{2}'(x)>0$ on $a_{1}\leq x\leq a_{2}$.\\

\textbf{Case 3:} $a_2<x<1/2$.
Since $A_{1}(x)>0$, $A_{2}(x)>0$ and $g_2-12c_0^{2}>0$, we have
\begin{align}
&\wp\left(x-1/2\right)+\frac{2g_3+4c_{0}^{3}+g_2 c_0}{g_2-12c_{0}^{2}}>0,\label{aftA1A23}\\
&\wp^{2}(x-1/2)+\wp(x-1/2)\frac{6g_3+4g_2 c_0}{g_2-12c_{0}^{2}}+\frac{6g_{3}c_{0}+g_{2}c_{0}^{2}+g_{2}^{2}/4}{g_2-12c_{0}^{2}}>0.\label{aftA1A24}
\end{align}
From (\ref{G2}), as $x\to{\frac{1}{2}}^{-}$,
\begin{align*}
G_{2}(x)=\frac{\theta_{2}'(x)}{\theta_{2}(x)}+\frac{\wp' (x-1/2)}{2\wp(x-1/2)}\left(1+O\left(\frac{1}{\wp(x-1/2)}\right)\right).
\end{align*}
Using (\ref{fth}) and (\ref{wps}), it is easy to check that $G_{2}(1/2)=0$. Next we show that $G_{2}'(x)<0$. From (\ref{capf6}),
\begin{align*}\label{g2pn}
G_{2}'(x)=\frac{{\wp'}^{2}(x-1/2)(1-Q(x))}{\left(\wp^{2}(x-1/2)+\wp(x-1/2)\frac{6g_3+4g_2 c_0}{g_2-12c_{0}^{2}}+\frac{6g_{3}c_{0}+g_{2}c_{0}^{2}+g_{2}^{2}/4}{g_2-12c_{0}^{2}}\right)},
\end{align*}
where
\begin{equation}\label{qx}
Q(x):=\frac{\left(\wp(x-1/2)+\frac{2g_3+4c_{0}^{3}+g_2 c_0}{g_2-12c_{0}^{2}}\right)\left(2\wp(x-1/2)+\frac{6g_3+4g_2 c_0}{g_2-12c_{0}^{2}}\right)}{2\left(\wp^{2}(x-1/2)+\wp(x-1/2)\frac{6g_3+4g_2 c_0}{g_2-12c_{0}^{2}}+\frac{6g_{3}c_{0}+g_{2}c_{0}^{2}+g_{2}^{2}/4}{g_2-12c_{0}^{2}}\right)}.
\end{equation}
We claim that $Q(x)>1$. Note that the denominator of $Q(x)$ can be simplified as follows:
\begin{align}\label{r1}
\quad 2\bigg(\wp^{2}(x&-1/2)+\wp(x-1/2)\frac{6g_3+4g_2 c_0}{g_2-12c_{0}^{2}}+\frac{6g_{3}c_{0}+g_{2}c_{0}^{2}+g_{2}^{2}/4}{g_2-12c_{0}^{2}}\bigg)\nonumber\\
&=\left(2\wp(x-1/2)+\frac{6g_3+4g_2 c_0}{g_2-12c_{0}^{2}}\right)\left(\wp(x-1/2)+\frac{6g_3+4g_2 c_0}{2(g_2-12c_{0}^{2})}\right)\nonumber\\
&\quad+\left(2\frac{6g_{3}c_{0}+g_{2}c_{0}^{2}+g_{2}^{2}/4}{g_2-12c_{0}^{2}}-\frac{(6g_3+4g_2 c_0)^2}{2(g_2-12c_{0}^{2})^2}\right).
\end{align}
Now
\begin{align}\label{r2}
2\wp(x-1/2)+\frac{6g_3+4g_2 c_0}{g_2-12c_{0}^{2}}&>2\wp(a_{2}-1/2)+\frac{6g_3+4g_2 c_0}{g_2-12c_{0}^{2}}\nonumber\\
&=2P+\frac{6g_3+4g_2 c_0}{g_2-12c_{0}^{2}}\nonumber\\
&=\frac{\sqrt{\Delta}}{(g_{2}-12c_{0}^{2})}\nonumber\\
&>0.
\end{align}
From (\ref{rema2}), we have
\begin{equation}\label{r3}
\wp(x-1/2)+\frac{2g_3+4c_{0}^{3}+g_2 c_0}{g_2-12c_{0}^{2}}>\wp(x-1/2)+\frac{6g_3+4g_2 c_0}{2(g_2-12c_{0}^{2})}.
\end{equation}
By (\ref{del}), the last term on the right-hand side of (\ref{r1}) is negative. Hence, (\ref{r1}), (\ref{r2}), (\ref{r3}) and (\ref{aftA1A24}) imply that $Q(x)>1$.
Therefore $G_{2}'(x)<0$. By the mean value theorem, for any $x\in(a_2,1/2)$, $G_{2}(x)-G_{2}(1/2)=G_{2}'(b)(x-1/2)$ for some $b\in(x,1/2)$. Hence $G_{2}(x)>0$. Since $A_{2}(x)>0$, this implies that $F_{2}^{'}(x)>0$. 

From the above three cases, we conclude that $F_{2}^{'}(x)>0$ in $0<x<1/2$. Since $F_2(1/2)=0$, by another application of the mean value theorem, we conclude that $F_2(x)<0$ in $0<x<1/2$. This completes the proof.
\begin{figure}
\centering
\epsfig{file=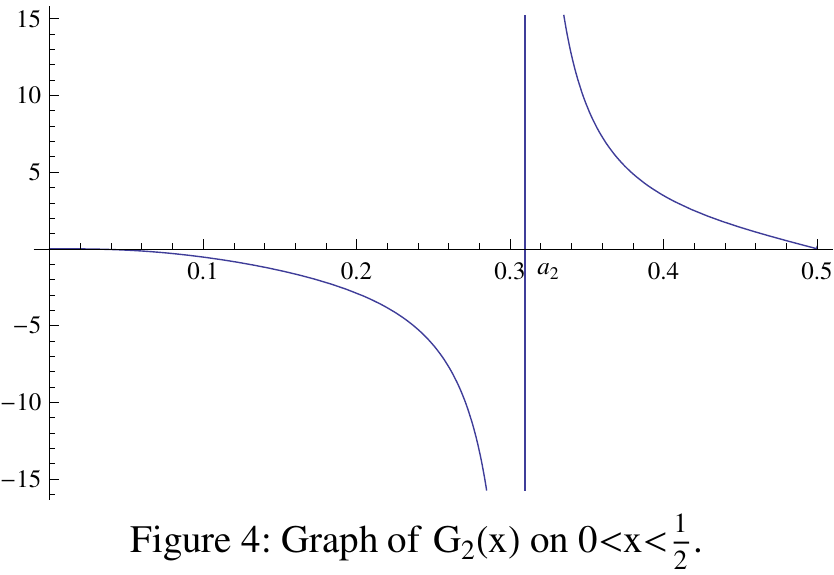, width=0.5\linewidth,clip=}
\end{figure}
Figure 4 shows the graph of $G_{2}(x)$ on $0<x<1/2$.
\section{Proof of monotonicity of $\displaystyle\frac{\partial S_{3}}{\partial t}$}
The method for proving monotonicity of  $\frac{\partial S_{3}}{\partial t}$ is similar to that of $\frac{\partial S_{2}}{\partial t}$ and so we will be brief. From \cite[Theorem 1]{dixsoly}, since $S_{3}(u,v;t)$ is decreasing on $0<t<\infty$, we see at once that $\frac{\partial S_{3}}{\partial t}<0$. Let $L_3:=\log S_{3}(u,v;t)$. Observe that 
\begin{equation*}
\frac{\partial S_3}{\partial t}=S_{3}\frac{\partial L_{3}}{\partial t}.
\end{equation*}
It suffices to show that $\frac{\partial^{2}S_3}{\partial t^{2}}>0$. Now,
\begin{equation*}
\frac{\partial^{2}S_3}{\partial t^{2}}=\frac{\partial}{\partial t}\left(S_{3}\frac{\partial L}{\partial t}\right)=S_3\left(\frac{\partial^{2}L_3}{\partial t^{2}}+\left(\frac{\partial L_3}{\partial t}\right)^{2}\right).
\end{equation*}
We show that $\frac{\partial^{2}L_3}{\partial t^{2}}>0$. Observe that using (\ref{he}) twice, we have $\frac{\partial^{2}}{\partial t^{2}}\theta_3(x/2|i\pi t)=\frac{\partial^{4}}{\partial x^{4}}\theta_3(x/2|i\pi t)$. It suffices to show that the function $\theta_{3}^{(4)}(x|i\pi t)/\theta_{3}(x|i\pi t)-\left(\theta_{3}''(x|i\pi t)/\theta_{3}(x|i\pi t)\right)^{2}$ decreases on $0<x<1/2$. Fix $t$ where $0<t<\infty$. Using (\ref{rel}) and (\ref{weipt1}), we find that
\begin{equation}\label{weipt3}
\left(\frac{\theta_{3}'(x)}{\theta_{3}(x)}\right)'=-\left(\wp\left(x+\frac{\tau-1}{2}\right)-c_{0}\right).
\end{equation}
Observe that
\begin{align*}
\frac{\theta_{3}^{(4)}(x)}{\theta_{3}(x)}-\left(\frac{\theta_{3}''(x)}{\theta_{3}(x)}\right)^{2}&=-4\left(\frac{\theta_{3}'(x)}{\theta_{3}(x)}\right)^{2}\left(\wp\left(x+\frac{\tau-1}{2}\right)-c_{0}\right)+2\left(\wp\left(x+\frac{\tau-1}{2}\right)-c_{0}\right)^2\nonumber\\
&\quad-4\frac{\theta_{3}'(x)}{\theta_{3}(x)}\wp'\left(x+\frac{\tau-1}{2}\right)-\wp''\left(x+\frac{\tau-1}{2}\right).
\end{align*}
Using (\ref{weipt3}), we find that
\begin{align*}
\frac{d}{dx}\left(\frac{\theta_{3}^{(4)}(x)}{\theta_{3}(x)}-\left(\frac{\theta_{3}''(x)}{\theta_{3}(x)}\right)^{2}\right)&=8\frac{\theta_{3}'(x)}{\theta_{3}(x)}\left(\wp\left(x+\frac{\tau-1}{2}\right)-c_{0}\right)^2-4\left(\frac{\theta_{3}'(x)}{\theta_{3}(x)}\right)^2\wp'\left(x+\frac{\tau-1}{2}\right)\nonumber\\
&\quad+8\left(\wp\left(x+\frac{\tau-1}{2}\right)-c_{0}\right)\wp'\left(x+\frac{\tau-1}{2}\right)\nonumber\\
&\quad-4\frac{\theta_{3}'(x)}{\theta_{3}(x)}\wp''\left(x+\frac{\tau-1}{2}\right)-\wp'''\left(x+\frac{\tau-1}{2}\right).
\end{align*}
Since $\wp\left(x+\frac{\tau-1}{2}\right)$ decreases on $0<x<1/2$, we have $\wp'\left(x+\frac{\tau-1}{2}\right)<0$. Define a function $F_{3}(x)$ as
\begin{align}\label{3capf}
F_{3}(x)&:=\frac{1}{\wp'(x+\frac{\tau-1}{2})}\frac{d}{dx}\left(\frac{\theta_{3}^{(4)}(x)}{\theta_{3}(x)}-\left(\frac{\theta_{3}''(x)}{\theta_{3}(x)}\right)^{2}\right)\nonumber\\
&=8\frac{\theta_{3}'(x)}{\theta_{3}(x)}\frac{\left(\wp\left(x+\frac{\tau-1}{2}\right)-c_{0}\right)^2}{\wp'\left(x+\frac{\tau-1}{2}\right)}-4\left(\frac{\theta_{3}'(x)}{\theta_{3}(x)}\right)^2+8\left(\wp\left(x+\frac{\tau-1}{2}\right)-c_{0}\right)\nonumber\\
&\quad-4\frac{\theta_{3}'(x)}{\theta_{3}(x)}\frac{\wp''\left(x+\frac{\tau-1}{2}\right)}{\wp'\left(x+\frac{\tau-1}{2}\right)}-\frac{\wp'''\left(x+\frac{\tau-1}{2}\right)}{\wp'\left(x+\frac{\tau-1}{2}\right)}.
\end{align}
It suffices to prove that $F_{3}(x)>0$. We prove this by showing that $F_{3}'(x)<0$ and $F_{3}(1/2)>0$, because then by the mean value theorem, for any $x\in(0,1/2)$, we have $F_{3}(x)-F_{3}(1/2)=F_{3}'(e)(x-1/2)$ for some $e\in(x,1/2)$ whence $F_{3}(x)>0$. We first show that $F_{3}(1/2)>0$. Using the thirs differential equation in (\ref{wpde}), we have
\begin{align}\label{l3capf}
F_{3}(1/2)&=8(e_3-c_0)^2\lim_{x\to {\frac{1}{2}}^{-}}\frac{\theta_{3}'(x)/\theta_{3}(x)}{\wp'\left(x+\frac{\tau-1}{2}\right)}-4\lim_{x\to {\frac{1}{2}}^{-}}\left(\frac{\theta_{3}'(x)}{\theta_{3}(x)}\right)^2+8(e_3-c_{0})\nonumber\\
&\quad-4\wp''(\tau/2)\lim_{x\to {\frac{1}{2}}^{-}}\frac{\theta_{3}'(x)/\theta_{3}(x)}{\wp'\left(x+\frac{\tau-1}{2}\right)}-12e_3.
\end{align}
Now \cite[p.~358, Section 13.19]{htf}
\begin{align}\label{fth3}
\frac{\theta_{3}'(z)}{\theta_{3}(z)}=4\pi\sum_{n=1}^{\infty}(-1)^n\frac{q^{n}}{1-q^{2n}}\sin 2n\pi z
\end{align} 
implies that $\theta_{3}'(x)/\theta_{3}(x)$ vanishes at $x=1/2$. Note that $\wp'\left(x+\frac{\tau-1}{2}\right)=0$ at $x=1/2$ too. Hence, using L'Hopital's rule in (\ref{l3capf}), then (\ref{weipt3}), the second differential equation in (\ref{wpde}) and simplifying, we see that
\begin{equation*}
F_{3}(1/2)=\frac{16(e_3-c_0)^3}{g_2-12e_3^2}-12c_0.
\end{equation*}
Now using (\ref{e1e2e3}) and (\ref{ineq}), note that 
\begin{align*}
g_2-12e_3^2&=-4(e_1e_2+e_2e_3+e_3e_1)-12e_3^2\nonumber\\
&=4(e_3-e_1)(e_2-e_3)\nonumber\\
&<0.
\end{align*}
Thus, we need to show that $16(e_3-c_0)^3-12c_0(g_2-12e_3^2)<0$ or equivalently, $(e_3-c_0)^3<3c_0(e_3-e_1)(e_2-e_3)$. Consider two cases.

\textbf{Case 1}: $c_0\leq 0$.

By (\ref{ineq}), the left-hand side is less than zero but the right-hand side is greater than or equal to zero. This proves the required inequality.

\textbf{Case 2}: $c_0>0$.

Using (\ref{e1e2e3}),
\begin{align*}
&3c_0(e_3-e_1)(e_2-e_3)-(e_3-c_0)^3\nonumber\\
&=(e_1+e_2+c_0)^3-3c_0(2e_1+e_2)(e_1+2e_2)\nonumber\\
&=\frac{1}{27}\left(((2e_1+e_2)+(e_1+2e_2)+3c_0)^3-27\cdot 3c_0(2e_1+e_2)(e_1+2e_2)\right).
\end{align*}
The last expression is clearly positive by the Arithmetic mean-Geometric mean inequality and since $2e_1+e_2$, $e_1+2e_2$ are positive by (\ref{ineq}) and since $3c_0$ is positive. From the above two cases, we conclude that $F_{3}(1/2)>0$. Our next task is to show that $F_{3}'(x)<0$. From (\ref{3capf}), we have
\begin{align*}
\frac{F_{3}^{'}(x)}{4}=\frac{\theta_{3}'(x)}{\theta_{3}(x)}\frac{A_{2}(x+\frac{\tau}{2})}{{\wp'}^{2}\left(x+\frac{\tau-1}{2}\right)}+\frac{A_{1}(x+\frac{\tau}{2})}{\wp'\left(x+\frac{\tau-1}{2}\right)},
\end{align*}
where $A_{1}(x)$ and $A_{2}(x)$ are defined in (\ref{a1}). Now
\begin{equation*}
A_{2}'\left(x+\frac{\tau}{2}\right)=\wp'\left(x+\frac{\tau-1}{2}\right)\left(2\left(g_{2}-12c_{0}^{2}\right)\wp\left(x+\frac{\tau-1}{2}\right)+\left(6g_{3}+4g_{2}c_{0}\right)\right).
\end{equation*}
From (\ref{ineq}), (\ref{rema}) and the facts that $e_3<\wp\left(x+\frac{\tau-1}{2}\right)<e_2$ and $\wp'\left(x+\frac{\tau-1}{2}\right)<0$ on $0<x<1/2$, we find that $A_{2}'(x+\frac{\tau}{2})>0$. Also by Lemma \ref{ineqq2}, $A_{2}(\frac{\tau}{2})>0$. By the mean value theorem, for any $x\in(0,1/2)$, we have $A_{2}(x+\frac{\tau}{2})=A_{2}(\frac{\tau}{2})+xA_{2}'(k+\frac{\tau}{2})>0$ for some $k\in(0,x)$. 
\begin{figure}
\centering
\epsfig{file=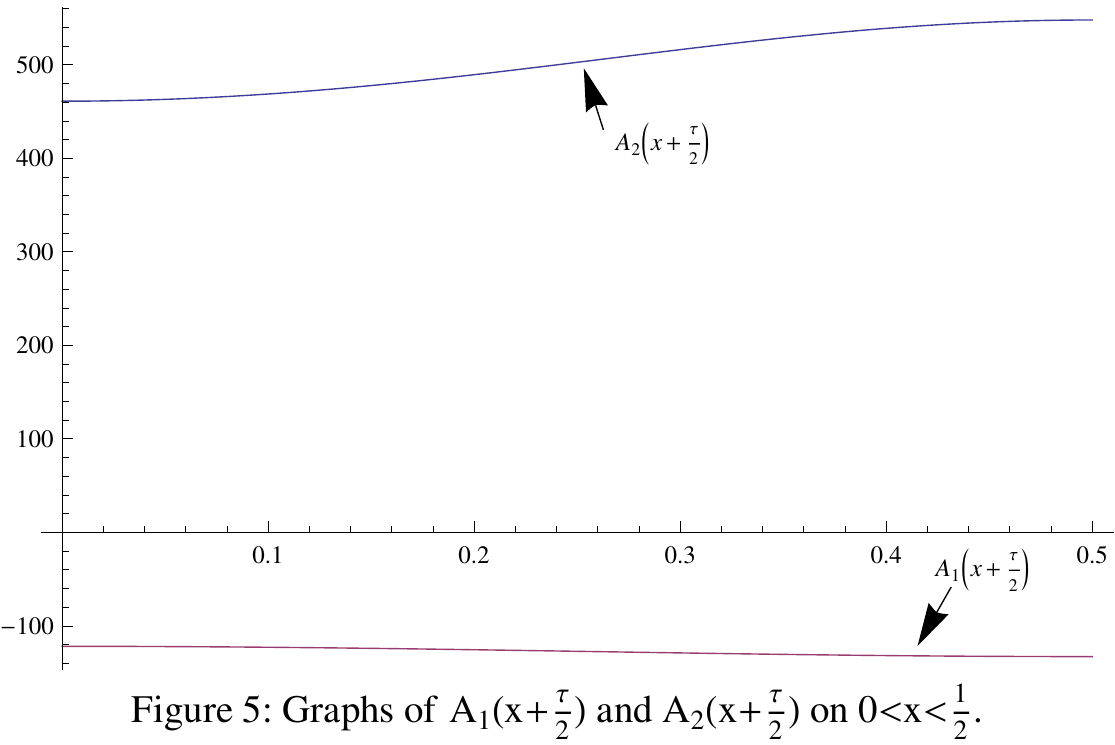, width=0.5\linewidth,clip=}
\end{figure}
Figure 5 shows the graphs of $A_{1}(\frac{\tau}{2})$ and $A_{2}(\frac{\tau}{2})$ on $0<x<1/2$. Now define $G_{3}$ by
{\allowdisplaybreaks\begin{align}\label{G3}
G_{3}(x)&:=\frac{F_{3}'(x){\wp'}^{2}(x+\frac{\tau-1}{2})}{4A_{2}(x+\frac{\tau}{2})}\nonumber\\
&=\frac{\theta_{3}'(x)}{\theta_{3}(x)}+\frac{\wp' (x+\frac{\tau-1}{2})\left(\wp\left(x+\frac{\tau-1}{2}\right)+\frac{2g_3+4c_{0}^{3}+g_2 c_0}{g_2-12c_{0}^{2}}\right)}{2\left(\wp^{2}(x+\frac{\tau-1}{2})+\wp(x+\frac{\tau-1}{2})\frac{6g_3+4g_2 c_0}{g_2-12c_{0}^{2}}+\frac{6g_{3}c_{0}+g_{2}c_{0}^{2}+g_{2}^{2}/4}{g_2-12c_{0}^{2}}\right)}.
\end{align}}
From the above discussion, it suffices to show that $G_{3}(x)<0$. Now, from (\ref{fth3}) and the fact that $\wp'\left(\frac{\tau-1}{2}\right)=0=\wp'\left(\frac{\tau}{2}\right)$, it is easy to see that $G_{3}(0)=0=G_{3}(1/2)$. This implies that $G_{3}'(x)$ has at least one zero in $0<x<1/2$. Differentiating both sides of (\ref{G3}) with respect to $x$ and simplifying, we observe that
\begin{align*}
G_{3}'(x)=\frac{{\wp'}^{2}(x+\frac{\tau-1}{2})(1-Q(x+\frac{\tau}{2}))}{\left(\wp^{2}(x+\frac{\tau-1}{2})+\wp(x+\frac{\tau-1}{2})\frac{6g_3+4g_2 c_0}{g_2-12c_{0}^{2}}+\frac{6g_{3}c_{0}+g_{2}c_{0}^{2}+g_{2}^{2}/4}{g_2-12c_{0}^{2}}\right)},
\end{align*}
where $Q(x)$ is defined in (\ref{qx}). Now
\begin{align}\label{qcal}
1-Q\left(x+\frac{\tau}{2}\right)&=1-\frac{\left(\wp(x+\frac{\tau-1}{2})+\frac{2g_3+4c_{0}^{3}+g_2 c_0}{g_2-12c_{0}^{2}}\right)\left(2\wp(x+\frac{\tau-1}{2})+\frac{6g_3+4g_2 c_0}{g_2-12c_{0}^{2}}\right)}{2\left(\wp^{2}(x+\frac{\tau-1}{2})+\wp(x+\frac{\tau-1}{2})\frac{6g_3+4g_2 c_0}{g_2-12c_{0}^{2}}+\frac{6g_{3}c_{0}+g_{2}c_{0}^{2}+g_{2}^{2}/4}{g_2-12c_{0}^{2}}\right)}\nonumber\\
&=\frac{2\wp(x+\frac{\tau-1}{2})\frac{g_3+g_2c_0-4c_0^3}{g_2-12c_0^2}+C}{2\left(\wp^{2}(x+\frac{\tau-1}{2})+\wp(x+\frac{\tau-1}{2})\frac{6g_3+4g_2 c_0}{g_2-12c_{0}^{2}}+\frac{6g_{3}c_{0}+g_{2}c_{0}^{2}+g_{2}^{2}/4}{g_2-12c_{0}^{2}}\right)},
\end{align}
where 
\begin{equation*}
C:=\frac{2(6g_{3}c_{0}+g_{2}c_{0}^{2}+g_{2}^{2}/4)}{g_2-12c_{0}^{2}}-\frac{(6g_3+4g_2 c_0)(2g_3+4c_{0}^{3}+g_2 c_0)}{(g_2-12c_{0}^{2})^2}.
\end{equation*}
The numerator in the last expression of (\ref{qcal}) has atmost one zero since it is linear in $\wp(x+\frac{\tau-1}{2})$ and $\wp(x+\frac{\tau-1}{2})$ is monotone. Hence, $G_{3}'(x)$ has exactly one zero, say $x_{0}$, in $0<x<1/2$. Thus we will be done if we can show that $G_{3}(x)<0$ at some point in the interval $0<x<1/2$. In fact, we show that $G_{3}(x)<0$ on $(0,x_0)$.

For any $x$ in $(0,x_0)$, we have $\wp\left(x+\frac{\tau-1}{2}\right)>\wp\left(x_0+\frac{\tau-1}{2}\right)$. Also, 
\begin{align*}
\frac{g_3+g_2c_0-4c_0^3}{g_2-12c_0^2}=\frac{g_3+g_2c_0/2+2c_0^3}{g_2-12c_0^2}+\frac{c_0(g_2/2-6c_0^2)}{g_2-12c_0^2}<\frac{-(e_1-c_0)}{2}<0,
\end{align*}
where last two inequalities follows from (\ref{rema}) and (\ref{ineq}).
Therefore
\begin{equation*} {2\wp\left(x+\frac{\tau-1}{2}\right)\frac{g_3+g_2c_0-4c_0^3}{g_2-12c_0^2}+C}<{2\wp\left(x_0+\frac{\tau-1}{2}\right)\frac{g_3+g_2c_0-4c_0^3}{g_2-12c_0^2}+C}=0,
\end{equation*}
where the last equality comes from the fact that $G_{3}'(x_0)=0$. Hence, $G_{3}'(x)<0$ for $0<x<x_0$. Then it is clear by the mean value theorem that for any $x\in(0,x_0)$, $G_3(x)=xG_{3}'(x_1)<0$ for some $x_1\in(0,x)$. So finally $G_3(x)<0$ for $0<x<1/2$. This completes the proof. Figure 6 shows the graph of $G_{3}(x)$ on $0<x<1/2$.
\begin{figure}
\centering
\epsfig{file=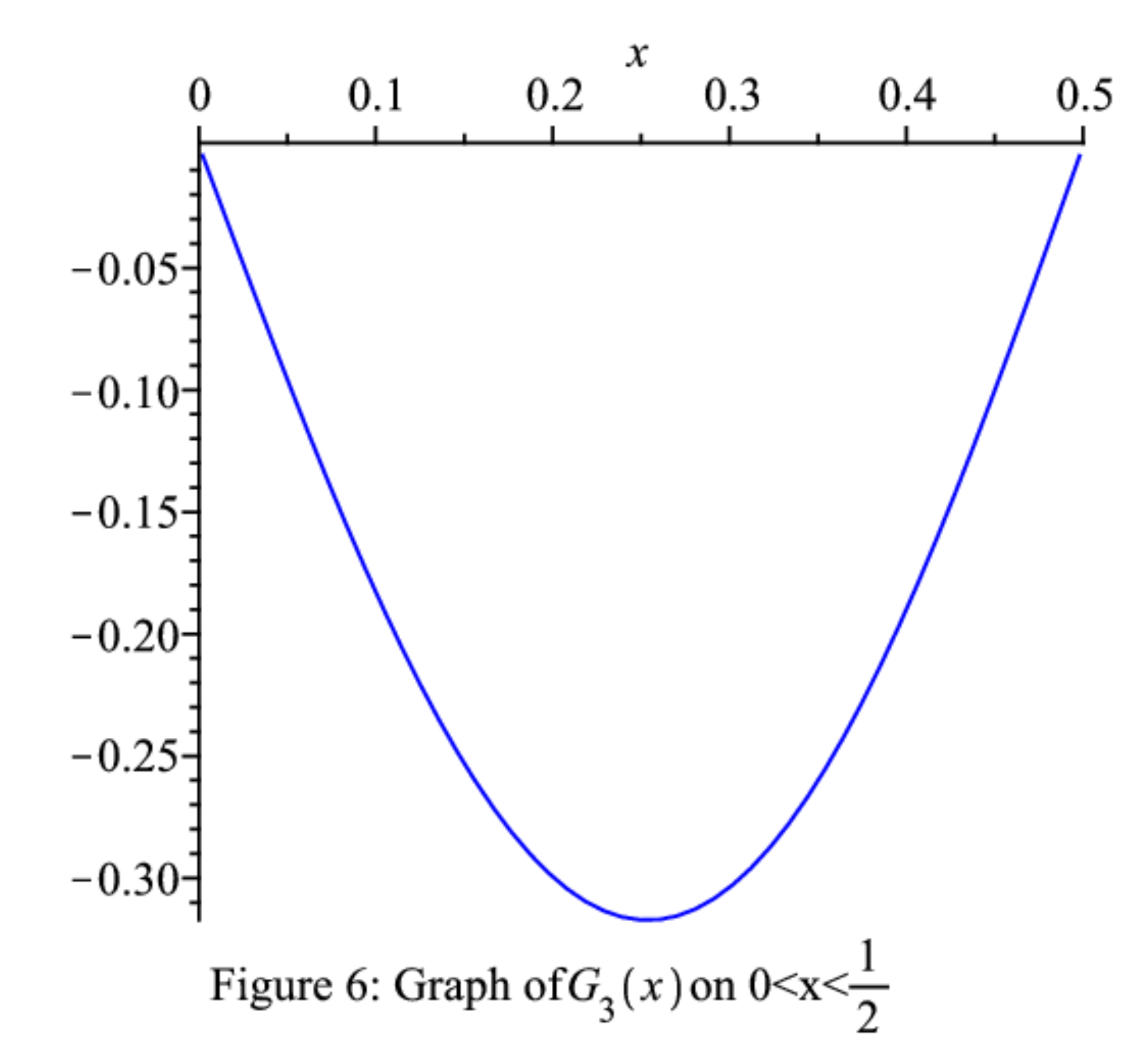, width=0.5\linewidth,clip=}
\end{figure}

\end{document}